\documentclass[11pt,reqno,a4paper]{amsart}
\usepackage{amssymb}
\usepackage{amsmath}
\usepackage[mathscr]{euscript}
\usepackage{hyperref}
\usepackage{graphicx}
\usepackage[normalem]{ulem} 
\usepackage{chngpage}
\usepackage{amssymb}
\usepackage{hyperref}
\RequirePackage[dvipsnames]{xcolor} 
\definecolor{halfgray}{gray}{0.55}
\definecolor{webgreen}{rgb}{0,0.5,0}
\definecolor{webbrown}{rgb}{.6,0,0} 
\hypersetup{%
  colorlinks=true, linktocpage=true, pdfstartpage=3,
  pdfstartview=FitV,%
  breaklinks=true, pdfpagemode=UseNone, pageanchor=true,
  pdfpagemode=UseOutlines,%
  plainpages=false, bookmarksnumbered, bookmarksopen=true,
  bookmarksopenlevel=1,%
  hypertexnames=true,
  pdfhighlight=/O,
  urlcolor=webbrown, linkcolor=RoyalBlue,
  citecolor=webgreen, 
  pdftitle={},%
  pdfauthor={},%
  pdfsubject={2000 MAthematical Subject Classification: Primary:},%
  pdfkeywords={},%
  pdfcreator={pdfLaTeX},%
  pdfproducer={LaTeX with hyperref}%
}

\usepackage{enumitem}

\newtheorem{theorem}{Theorem}

\newtheorem{lemma}{Lemma}

\numberwithin{equation}{section}

\renewcommand{\epsilon}{\varepsilon}

\allowdisplaybreaks[1]

\begin{document}

\title{HIGHER REGULARITY OF SOLUTIONS OF AN
ITERATIVE FUNCTIONAL EQUATION}

\author{Liang Feng}
\address[Liang Feng]{School of Mathematical Sciences, Chongqing Normal University, Chongqing 401331, China}
\email{fengliang199710@163.com}

\author{Xiao Tang}
\address[Xiao Tang]{School of Mathematical Sciences, Chongqing Normal University, Chongqing 401331, China}
\email[Corresponding author]{mathtx@163.com}


\keywords{Iteration; functional equation; $C^n$ Smooth solution; Fa\`a di Bruno’s formula; Fiber Contraction Theorem.}
\subjclass[2020]{39B12, 39B22}

\maketitle

 \begin{abstract}
In this paper, we investigate the existence of $C^n$, $n\in \mathbb{N}^+$, solutions  for a class of second-order iterative functional equations involving iterates of the unknown function and a nonlinear term. Applying the Fiber Contraction Theorem and Fa\`a di Bruno's Formula, we establish the existence of bounded $C^n$ solutions with bounded derivatives of order from $1$ to $n$.
\end{abstract}

\section{Introduction}
Iterative functional equations, which involve iterations of unknown functions (\cite{Baron2,KM} and references therein), have been widely investigated. 
This class includes iterative roots (\cite{LL1,LL2} and references therein) and polynomial-like iterative equations (\cite{MA,ZW} and references therein). The fundamental challenge stems from  nonlinearity of the iteration operator.

In 2000, N. Brillou\"{e}t-Belluot (\cite{BB1}) first introduced the second-order iterative functional equation
\begin{equation}\label{o2}
    \phi^2(x) = \phi(x+a)-x,
\end{equation}
motivated by the multi-variable functional equation
\begin{equation}\label{o1}
 x+\phi(y+\phi(x)) = y+\phi(x+\phi(y)).
\end{equation}
It is easy to see that \eqref{o1} with $y = 0$ is reduced to \eqref{o2} with $a=\phi(0)$. 
Since the existence of solutions to functional equations must be re‑examined even under small changes, many researchers  studied both equations \eqref{o2} and \eqref{o1} extensively; see \cite{Balcerowski, BB2, JJ, SM, TX, ZY}. By \cite[Corollary 3.8]{DS},  \cite[Theorem 11]{MJ} or \cite[Theorem 5]{YD}, equation \eqref{o2} has no continuous solutions on $\mathbb{R}$ when $a = 0$.
 
In 2010, N. Brillou\"{e}t-Belluot and W. Zhang (\cite{BB2}) considered a general version of equation \eqref{o2},
\begin{equation}\label{o3}
    \phi^2(x) = \lambda\phi(x+a) + \mu x, 
\end{equation}
where $\lambda, a$ and $\mu$ are real constants such that $a\lambda \neq 0$. 
They investigated the existence of Lipschitz solutions of equation \eqref{o3} on any compact interval and gave a construction of piecewise continuous solutions of equation \eqref{o3} on a bounded interval. Later, Y. Zeng and W. Zhang (\cite{ZY}) proved that equation \eqref{o3} has no continuous solutions on $\mathbb{R}$ when $\lambda=1$ and $\mu \le -1$. They also provided a sufficient condition for the existence of continuous solutions of equation \eqref{o3} on $\mathbb{R}$.

In 2018, X. Tang and W. Zhang (\cite{TX}) generalized equation \eqref{o3} to the more general form:
\begin{equation}\label{o4}
    \phi^2(x) = h(\phi(f(x))) +g(x), \qquad x\in \mathbb{R},
\end{equation}
where $h, f$ and $g$ are given functions, and $\phi$ is an unknown one. 
They proved the existence of bounded Lipschitz solutions to equation \eqref{o4} on $\mathbb{R}$ under Lipschitz condition when $g$ is bounded while the existence of unbounded Lipschitz solutions to the same equation on  $\mathbb{R}$ when $g$ is unbounded under additional bounded nonlinearity conditions.
Furthermore, without Lipschitz condition, they constructed continuous solutions on  $\mathbb{R}$.
In 2024, W. Shi and X. Tang (\cite{tx}) further studied the $C^1$ solutions of equation \eqref{o4} using the Fiber Contraction Theorem.

In this paper, we continue to investigate  $C^n$, $n\ge 2$ being an integer, solutions of equation \eqref{o4}. Imposing constraints on  derivatives  of the given functions and applying the Fiber Contraction Theorem (\cite{VA}) presented in Section \ref{lemma}, we establish the existence of $C^n$ solutions with bounded  derivatives. The concrete result is following:
\begin{theorem}\label{th}
Assume that functions $h:\mathbb{R} \to \mathbb{R}, f:\mathbb{R} \to  \mathbb{R}$ and $g:\mathbb{R} \to \mathbb{R}$ are of class $C^{n}$ such that
\begin{equation}\label{o}
\inf_{x\in \mathbb{R}}|h'(x)|\ge m, 
~\inf_{x\in \mathbb{R}}|f'(x)|\ge \alpha,
\end{equation}
\begin{equation}\label{gjie}
\max\Big\{\sup_{x\in \mathbb{R}}|g'(x)|,\sup_{x\in \mathbb{R}}|g''(x)|,...,\sup_{x\in \mathbb{R}}|g^{(n)}(x)| \Big\}~\le \beta, ~\sup_{x\in \mathbb{R}}|g(x)|<+\infty,~
\end{equation}
 where $m > 1$, $\alpha > 0$ are real constants, and $\beta > 0$ is a sufficiently small real constant.
Furthermore, 
\begin{equation}\label{oo}
\max_{1\le k\le n}\Big\{ \sup_{x\in \mathbb{R}}|h^{(k)}(x)|,
  \sup_{x\in \mathbb{R}}|f^{(k)}(x)|
 \Big\}< +\infty.
 \end{equation}
Then the functional equation \eqref{o4}
has a bounded $C^{n}$ solution whose the first up to $n$-th derivatives are bounded.
\end{theorem}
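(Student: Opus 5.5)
The approach is to turn \eqref{o4} into a fixed-point problem for a contraction on bounded continuous functions. Then the Fiber Contraction Theorem, applied inductively to the jets $(\phi,\phi',\dots,\phi^{(n)})$, upgrades the fixed point to a $C^n$ function. Since $|h'|\ge m>1$ and $h'$ is continuous, $h'$ has constant sign, so $h$ is a $C^n$ diffeomorphism of $\mathbb{R}$ onto its image. Because $m>1$, the image is all of $\mathbb{R}$, and $(h^{-1})'$ is bounded by $1/m<1$. Likewise $|f'|\ge\alpha$ makes $f$ a $C^n$ diffeomorphism of $\mathbb{R}$ with $|(f^{-1})'|\le 1/\alpha$. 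Rewriting \eqref{o4} as
\[
\phi(x)=h^{-1}\bigl(\phi(\phi(f^{-1}(x)))-g(f^{-1}(x))\bigr)=:T\phi(x),
\]
the problem becomes finding a fixed point of $T$.

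For the base step I work in the set $X_0$ of continuous functions with $\|\phi\|_\infty\le M$ and $\mathrm{Lip}(\phi)\le\delta$, with $M$ and $\delta$ chosen from $\sup|g|$, $m$, $\alpha$ and $\beta$. This set is closed in the sup norm. One checks that $T$ maps $X_0$ into itself. Boundedness holds because $h^{-1}$ has at most linear growth with slope $1/m$. The Lipschitz bound holds because $\mathrm{Lip}(T\phi)\le \frac{1}{m}(\delta^2+\beta)/\alpha$, which is at most $\delta$ when $\delta$ and $\beta$ are small enough. The contraction estimate is $\|T\phi-T\psi\|\le \frac1m(1+\delta)\|\phi-\psi\|$. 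Choosing $\delta<m-1$ makes $T$ a contraction, which yields an attracting fixed point $\phi_*$; this is essentially the Tang–Zhang and Shi–Tang argument.

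For the higher derivatives I differentiate $T\phi$ formally by Fa\`a di Bruno's formula. For $1\le k\le n$, $(T\phi)^{(k)}$ equals $(h^{-1})'(\cdot)\,\phi^{(k)}(\phi(f^{-1}x))\,[\phi'(f^{-1}x)]^{\dots}[(f^{-1})']^k$ plus $(h^{-1})'(\cdot)\,\phi'(\phi(f^{-1}x))\,\phi^{(k)}(f^{-1}x)((f^{-1})')^k$, plus terms involving only $\phi,\dots,\phi^{(k-1)}$ and the derivatives of $h^{-1}$, $f^{-1}$ and $g$. These lower-order derivatives are bounded by \eqref{o}, \eqref{oo} and \eqref{gjie}, since the derivatives of inverse functions are polynomials in $h^{(j)}$ divided by powers of $h'$. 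Define the fiber map $\Phi_k(\phi,\dots,\phi^{(k-1)};\psi)$ by replacing $\phi^{(k)}$ with $\psi$ in this formula, where $\psi$ ranges over a closed ball of bounded continuous functions. The map $(\phi,\dots,\psi)\mapsto(T\phi,\dots,\Phi_k)$ is then a bundle map over the previous stage, and $\Phi_k$ is affine in $\psi$. The derivative part is $(T\phi)^{(k)}$ where $\phi$ already carries the correct jet. Induction on $k$ gives continuity in the base variables, and the Fiber Contraction Theorem of Section~\ref{lemma} yields a globally attracting fixed point $(\phi_*,\psi_1,\dots,\psi_n)$. Starting the iteration from a smooth $\phi_0$ with its true derivatives, the iterates $T^j\phi_0$ converge uniformly together with all derivatives up to order $n$. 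Hence $\phi_*\in C^n$ and $\phi_*^{(k)}=\psi_k$ is bounded.

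The main obstacle is the fiber-contraction constant at order $k$. The coefficient of $\psi$ is bounded by $\frac1m(\delta^k+\delta)\alpha^{-k}$, and this must be strictly less than $1$. The constraint does not come from $m$ alone, because the factor $\alpha^{-k}$ can be large when $\alpha<1$. The smallness must therefore come from $\delta$, which is of order $\beta$. I would first try to fix $\delta$ so that $\frac{\delta+\delta^k}{m\alpha^k}<1$ for all $k\le n$ while $\delta\alpha^{-1}(\delta+\beta)/m\le\delta$ still holds. This is where ``$\beta$ sufficiently small'' enters. It also requires choosing the radii of the balls for the $\psi$'s large enough to absorb the lower-order terms, whose size depends on the bounds in \eqref{oo}.
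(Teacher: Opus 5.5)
Your proposal is correct and follows essentially the same route as the paper. Both use a contraction on a closed Lipschitz ball under $\delta^2-m\alpha\delta+\beta\le 0$ and $\delta<m-1$ (the paper's \eqref{3}), fiber maps read off from Fa\`a di Bruno's formula with fiber constant $(\rho_1^k+\rho_1)/(m\alpha^k)$, radii $\rho_k$ enlarged to absorb the lower-order terms as in \eqref{8}, and the Fiber Contraction Theorem run from a smooth initial jet. The only slips are cosmetic: at $k=1$ the fiber map is quadratic rather than affine in $\psi_1$ (the constant $2\rho_1/(m\alpha)$ still works), the condition $\delta\alpha^{-1}(\delta+\beta)/m\le\delta$ should read $(\delta^2+\beta)/(m\alpha)\le\delta$, and $\delta$ must lie above the smaller root of that quadratic rather than just be small.
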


{\bf Remark:}\label{remark} In Theorem \eqref{th}, the $\beta>0$ is required to be sufficiently small in order to guarantee that in the following there exists an $L$ solving \eqref{3} and a $\rho_1$ solving \eqref{44} and \eqref{55}, ensuring the Fiber Contraction Theorem is applied.

In section \ref{lemma}, we give some preliminaries used in the proof of Theorem \ref{th}. Section \ref{proof} is devoted to proving Theorem \ref{th}.

We give an example to demonstrate our Theorem \ref{th}.
Consider the equation 
\begin{equation}\label{example equation}
    \phi^2(x)=2\phi(3x)+\frac{\text{sin}x}{100},
\end{equation}
which is of the form \eqref{o4} with $h(x)=2x, f(x)=3x$ and $g(x)=\frac{\text{sin}x}{100}$. We can check that $h$, $f$ and $g$ are of class $C^n$ and satisfy conditions \eqref{o}, \eqref{gjie} and \eqref{oo} with constants $m=\frac{3}{2}, \alpha=2$ and $\beta=\frac{1}{100}$. When $\beta=\frac{1}{100}$, $L$ in \eqref{3} exists and we can take $\rho_1=\frac{1}{2}$ such that \eqref{44} and \eqref{55} hold simultaneously. Consequently, by Theorem \ref{th}, equation \eqref{example equation} has a
solution of class $C^n$ and its first to $n$-th derivatives are bounded.

\section{Preliminaries}\label{lemma}
Firstly, we present the Fiber Contraction Theorem, which is the technique we would like to use in the section \ref{proof}.

\begin{lemma} ( \!\cite[Fiber Contraction Theorem ]{VA}) \label{fct}
Let $(Y_{i},d_i), 1\le i\le n,$ be complete metric spaces. Let $\Gamma:Y_{0} \times Y_1\times Y_2 \times \cdot\cdot \cdot \times Y_n\rightarrow Y_{0} \times Y_1\times Y_2 \times \cdot\cdot \cdot \times Y_n$ be a mapping of the form 
$$\Gamma(y_0,y_1,y_2,...,y_n)=(\Phi_0(y_0),\Phi_1(y_0,y_1),\Phi_2(y_0,y_1,y_2),...,\Phi_n(y_0,y_1,y_2,...,y_n)), $$ and such that each $\Phi_k: Y_0\times Y_1 \times\cdot\cdot \cdot\times Y_k  \rightarrow Y_k(0\le k \le n)$ is uniformly  contractive for the first $k$ variables, that is
$$d_{k}(\Phi_k(y_0,y_1,y_2,...,y_k),\Phi_k(y_0,y_1,y_2,...,\tilde{y_k})) \le \gamma  d_{k}(y_k,\tilde{y_k})$$ 
for any $y_i \in Y_i, 0\le i\le k-1, y_k,\tilde y_k \in Y_k$, 
where $0<\gamma <1$ is constant. Then $\Gamma$ has a unique fixed point $(y_{0\infty},y_{1\infty},y_{2\infty},...,y_{n\infty}) \in Y_0\times Y_1 \times\cdot\cdot \cdot\times Y_n. $ If moreover each of the mappings $\Phi_k(\cdot, y_{k\infty}):  Y_0\times Y_1 \times\cdot\cdot \cdot\times Y_{k-1} \rightarrow Y_k(1\le k \le n)$ is continuous, then this fixed point is globally attractive.
\end{lemma}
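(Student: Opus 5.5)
The plan is to argue by induction on the number of factors. The base case is Banach's fixed point theorem on $Y_0$; each inductive step handles one more fiber. Throughout, I take $(Y_0,d_0)$ to be complete as well; the statement only lists $1\le i\le n$, but completeness of $Y_0$ is clearly intended.

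\emph{Existence and uniqueness.} For $k=0$, the map $\Phi_0:Y_0\to Y_0$ is a $\gamma$-contraction on a complete space. Banach's theorem therefore gives a unique fixed point $y_{0\infty}$.

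Suppose the truncated map $\Gamma_{k-1}=(\Phi_0,\dots,\Phi_{k-1})$ on $Y_0\times\cdots\times Y_{k-1}$ has a unique fixed point $(y_{0\infty},\dots,y_{k-1,\infty})$. Any fixed point of $\Gamma_k=(\Phi_0,\dots,\Phi_k)$ has, by the triangular structure, first $k$ components that form a fixed point of $\Gamma_{k-1}$, so they must equal $(y_{0\infty},\dots,y_{k-1,\infty})$. Its last component must then be a fixed point of
$$y_k\mapsto \Phi_k(y_{0\infty},\dots,y_{k-1,\infty},y_k).$$
By hypothesis this map is a $\gamma$-contraction on the complete space $Y_k$, so Banach's theorem gives exactly one such $y_{k\infty}$. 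This proves existence and uniqueness for $\Gamma_k$, and taking $k=n$ gives the claim for $\Gamma$.

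\emph{Global attraction.} Fix an arbitrary starting point and write $\Gamma^j(y)=(y_0^{(j)},\dots,y_n^{(j)})$. I show by induction on $k$ that $y_k^{(j)}\to y_{k\infty}$ as $j\to\infty$.

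For $k=0$ this is the convergence part of Banach's theorem.

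For $k\ge1$, write $z_j=(y_0^{(j)},\dots,y_{k-1}^{(j)})$ and $z_\infty=(y_{0\infty},\dots,y_{k-1,\infty})$. By the induction hypothesis $z_j\to z_\infty$ in the product metric. Since $y_k^{(j+1)}=\Phi_k(z_j,y_k^{(j)})$ and $y_{k\infty}=\Phi_k(z_\infty,y_{k\infty})$, the triangle inequality and the uniform contraction give
$$a_{j+1}:=d_k(y_k^{(j+1)},y_{k\infty})\le \gamma\, a_j+\varepsilon_j,\qquad \varepsilon_j:=d_k\big(\Phi_k(z_j,y_{k\infty}),\Phi_k(z_\infty,y_{k\infty})\big).$$
The assumed continuity of $\Phi_k(\cdot,y_{k\infty})$ gives $\varepsilon_j\to0$.

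The step needing the most care is the elementary recursion lemma: if $a_{j+1}\le\gamma a_j+\varepsilon_j$ with $0<\gamma<1$ and $\varepsilon_j\to0$, then $a_j\to0$. To prove it, iterate the inequality:
$$a_{j+N}\le \gamma^N a_j+\sum_{i=0}^{N-1}\gamma^{N-1-i}\varepsilon_{j+i}.$$
Given $\eta>0$, choose $j$ so that $\varepsilon_i<\eta(1-\gamma)$ for all $i\ge j$. Then
$$\limsup_{N\to\infty} a_{j+N}\le \frac{\eta(1-\gamma)}{1-\gamma}=\eta.$$
Since $\eta>0$ was arbitrary, $a_j\to0$. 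Here $a_j<\infty$ because $d_k$ is a metric.

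This completes the induction, so every orbit of $\Gamma$ converges to the fixed point $(y_{0\infty},\dots,y_{n\infty})$, i.e.\ the fixed point is globally attractive.
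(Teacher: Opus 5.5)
Your proof is correct, and you are right that $Y_0$ must also be assumed complete (the range $1\le i\le n$ in the statement is a slip). The paper gives no proof of this lemma and simply quotes it from \cite{VA}. Your argument is the standard Hirsch--Pugh proof of the fiber contraction theorem behind that citation, iterated over the $n$ fibers: Banach's theorem on the base and on each fiber over the already-found fixed point, then the estimate $a_{j+1}\le\gamma a_j+\varepsilon_j$ with the elementary sequence lemma.
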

Next, we state Fa\`a di Bruno’s formula (\cite{FF}), which gives the complete expression for higher‑order derivatives of composite functions, for use in the subsequent proof.
\begin{lemma}( \!\cite[Fa\`a di Bruno's Formula]{FF})\label{FDB}
Let $h,g: \mathbb{R}\to \mathbb{R}$ be sufficiently smooth. Then
    $$(h\circ g)^{(n)}=\sum_{\substack{\ell_1+2\ell_2+\dots+n\ell_n=n \\ \ell_1,\ell_2,\dots,\ell_n\ge 0}}\frac{n!}{\ell_1!\ell_2!\dots \ell_n!} h^{(\ell_1+\ell_2+\dots +\ell_n)}\circ g \cdot \prod_{i=1}^n\bigg(\frac{g^{(i)}}{i!}\bigg)^{\ell_i} ,$$
where $h^{(i)}$ denotes the $i$-th derivative of $h$ for positive integer $i$.
\end{lemma}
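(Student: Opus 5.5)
We prove Lemma \ref{FDB} in two stages. First we establish a set-partition version of the formula by induction on $n$. Then we collapse it to the stated multinomial form by counting partitions with a given block-size profile. Throughout, it suffices that $h,g\in C^n$: every term below involves only derivatives of order at most $n$, and products and compositions of continuous functions are continuous.

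\emph{Step 1 (partition form).} For a finite set $S$, let $\mathcal{P}(S)$ denote the set of partitions of $S$ into nonempty blocks. We claim that
$$(h\circ g)^{(n)}=\sum_{\pi\in\mathcal{P}(\{1,\dots,n\})} h^{(|\pi|)}\circ g\cdot\prod_{B\in\pi} g^{(|B|)},$$
where $|\pi|$ is the number of blocks and $|B|$ the size of a block.

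For $n=1$ the only partition is $\{\{1\}\}$, and the claim is the chain rule $(h\circ g)'=h'\circ g\cdot g'$.

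For the inductive step, differentiate each summand with the product rule and the chain rule. Two kinds of terms arise.
\begin{enumerate}
\item If the derivative falls on $h^{(|\pi|)}\circ g$, we get $h^{(|\pi|+1)}\circ g\cdot g'\cdot\prod_{B\in\pi} g^{(|B|)}$. This is the term of the partition $\pi\cup\{\{n+1\}\}$ of $\{1,\dots,n+1\}$, in which $n+1$ forms a new singleton block.
\item If the derivative falls on the factor $g^{(|B_0|)}$, that factor becomes $g^{(|B_0|+1)}$. This is the term of the partition obtained from $\pi$ by replacing $B_0$ with $B_0\cup\{n+1\}$.
\end{enumerate}
Every partition of $\{1,\dots,n+1\}$ arises exactly once in this way. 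Indeed, removing $n+1$ from its block recovers $\pi$: if the block was $\{n+1\}$ it is deleted (case 1), and otherwise $n+1$ is removed from the block containing it (case 2). So the differentiated sum is exactly the sum over $\mathcal{P}(\{1,\dots,n+1\})$, which completes the induction.

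\emph{Step 2 (counting).} Each term depends only on the profile $(\ell_1,\dots,\ell_n)$, where $\ell_i$ is the number of blocks of size $i$. These profiles satisfy $\sum_i i\ell_i=n$ and $|\pi|=\sum_i\ell_i$. Grouping the partitions by profile, it remains to show that the number of partitions of $\{1,\dots,n\}$ with a given profile is
$$\frac{n!}{\prod_i \ell_i!\,(i!)^{\ell_i}}.$$
To see this, list the $n$ elements in any of $n!$ orders and cut the list into consecutive blocks: first $\ell_1$ blocks of size $1$, then $\ell_2$ blocks of size $2$, and so on. Each partition with this profile is obtained from exactly $\prod_i \ell_i!\,(i!)^{\ell_i}$ orderings, since we may permute the elements inside each block ($i!$ ways per block of size $i$) and permute the blocks of equal size among themselves ($\ell_i!$ ways). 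Multiplying this count by the common term $h^{(\ell_1+\dots+\ell_n)}\circ g\cdot\prod_i (g^{(i)})^{\ell_i}$, and absorbing the factors $(i!)^{\ell_i}$ into $(g^{(i)}/i!)^{\ell_i}$, gives exactly the formula in Lemma \ref{FDB}.

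The main obstacle is the bijection in the inductive step. One must check that the two differentiation cases are disjoint and together exhaust $\mathcal{P}(\{1,\dots,n+1\})$, so that no term is double-counted or missed. Once the partition form is in hand, the orbit-counting in Step 2 is routine.
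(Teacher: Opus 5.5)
Your proof is correct. Note, however, that the paper does not prove this lemma: it quotes Fa\`a di Bruno's formula from the literature and uses it as a black box to write out $\Psi_k$ and $H_k$ in \eqref{psik} and \eqref{H_k}. You instead give the standard self-contained combinatorial proof. First, you prove the set-partition form $(h\circ g)^{(n)}=\sum_{\pi}h^{(|\pi|)}\circ g\cdot\prod_{B\in\pi}g^{(|B|)}$ by induction. The induction rests on the bijection ``remove $n+1$ from its block'' between $\mathcal{P}(\{1,\dots,n+1\})$ and pairs consisting of a partition of $\{1,\dots,n\}$ and a choice of either one of its blocks or a new singleton. Second, you count the partitions with a given block profile as $n!/\prod_i\ell_i!\,(i!)^{\ell_i}$ by an orbit argument. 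Both steps are carried out correctly, and absorbing $(i!)^{\ell_i}$ into $(g^{(i)}/i!)^{\ell_i}$ gives exactly the stated coefficients.

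Compared with the bare citation, your route buys two things. It makes the paper's vague ``sufficiently smooth'' precise. In fact $n$-fold differentiability of $h$ and $g$ already suffices, since passing from order $n$ to order $n+1$ only requires each $h^{(j)}$ and $g^{(j)}$ with $j\le n$ to be differentiable. And the intermediate partition form shows at a glance that $g^{(n)}$ occurs only in the one-block term $h'\circ g\cdot g^{(n)}$, and $h^{(n)}$ only in the all-singletons term $h^{(n)}\circ g\cdot(g')^n$. This is exactly the structural fact the paper relies on when it isolates the terms containing $\phi_k$ in \eqref{psik} and puts everything else into $H_k$. The citation buys brevity, which is reasonable for so classical a result.
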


We introduce some notations we will use.  
Let $$C_{b}^{0}(\mathbb{R}):= \{ \phi: \mathbb{R} \to \mathbb{R} \mid \phi  \text{ is continuous and} \sup _{x \in \mathbb{R}}|\phi(x)| < +\infty \}, $$
which is a Banach space with the supremum norm:
\[ \|\phi\|:=\sup_{x\in\mathbb{R}} |\phi(x)| \qquad \text{for every } \phi \in C_{b}^{0}(\mathbb{R}).\]
If a function $\phi$ satisfies 
\[ |\phi(x)-\phi(y)|\le L|x-y| \qquad \text{for all } x,y\in\mathbb R\]
and a constant $L>0$, we say that $\phi$ is Lipschitz on $\mathbb{R}$ with 
\[ \text{Lip}(\phi) := \sup_{x,y\in \mathbb R, x\ne y} \frac{|\phi(x)-\phi(y)|}{|x-y|} \le L.\]
For a constant $L > 0 $, let
\begin{equation*}
C_{b}^{0,1}(\mathbb{R};L) := C_{b}^{0}(\mathbb{R}) \cap \{ \phi: \mathbb{R} \to \mathbb{R} \mid \phi ~\text{is Lipschitz $\mathbb{R}$   such that } \text{Lip}(\phi) \le L \},
\end{equation*}
and for a constant $ \rho > 0 $, let
$$ C_{b,\rho}^0(\mathbb{R}) := C_{b}^{0}(\mathbb{R}) \cap \{ \phi: \mathbb{R} \to \mathbb{R} \mid \|\phi\| \le \rho \}.$$
Let $X$ be a set, $F_1, F_2 : X \to C_{b}^{0}(\mathbb{R})$ two maps and $G:\mathbb R\to \mathbb R$. We define the following operations:
\begin{description}
     \item[ Addition $\oplus$] $F_1\oplus F_2 : X \to C_{b}^{0}(\mathbb{R})$ is defined to 
    \[(F_1\oplus F_2)(\mathbf{x})(x):=F_1(\mathbf{x})(x)+F_2(\mathbf{x})(x)\]
    for all $\mathbf{x}\in X$ and all $x\in\mathbb R$, where $+$ denotes the addition on $\mathbb{R}$;
    \item[Multiplication $\bullet$] $G \bullet F_1: X\to C_{b}^{0}(\mathbb{R})$ is defined to be 
    \[(G\bullet F_1 )(\mathbf{x})(x):= G(x) \cdot F_1(\mathbf{x})(x)\] for all $\mathbf{x}\in X$ and all $x\in\mathbb R$, where $\cdot$ denotes the multiplication on $\mathbb{R}$;
    \item[Multiplication $\odot$] $F_1 \odot F_2 : X \to C_{b}^{0}(\mathbb{R})$ is defined to be 
    \[(F_1 \odot F_2)(\mathbf{x})(x):= F_1(\mathbf{x})(x) \cdot F_2(\mathbf{x})(x)\]
    for all $\mathbf{x}\in X$ and all $x\in\mathbb R$, where $\cdot$ denotes the multiplication on $\mathbb{R}$.
\end{description}

\begin{lemma}\label{lm-operation}
 Assume the maps 
 $$S_1, S_2: C_{b}^{0,1}(\mathbb{R},L) \times C_{b,\rho_1}^0(\mathbb{R})  \times C_{b,\rho_2}^0(\mathbb{R})\times \cdot \cdot \cdot \times C_{b,\rho_k}^0(\mathbb{R})  \rightarrow C^0_{b,\rho}(\mathbb{R})$$
 are continuous with respect to the first $k$ variables, where $\rho_i,\rho>0$, $1\le i\le k$, are  constants. Then so are $(G_1 \bullet S_1) \oplus
 (G_2\bullet S_2)$ and $S_1\odot S_2$, where $G_1$, $G_2 \in C_{b}^{0}(\mathbb{R})$.
\end{lemma}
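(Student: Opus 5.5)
The proof is a direct verification in two parts, one for $\oplus$ and $\bullet$ and one for $\odot$. Write $\mathbf{y}=(y_0,y_1,\dots,y_k)$ for a point of the product space $C_{b}^{0,1}(\mathbb{R},L)\times C_{b,\rho_1}^0(\mathbb{R})\times\cdots\times C_{b,\rho_k}^0(\mathbb{R})$, equipped with the product (say, maximum) of the supremum metrics. "Continuous with respect to the first $k$ variables" will be read as continuity in the variables $(y_0,\dots,y_{k-1})$ for each fixed last entry $y_k$, which is the form needed in Lemma~\ref{fct}. Reading it as joint continuity in all variables changes nothing, since the estimates below are pointwise in $\mathbf{y}$.

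\textbf{Well-definedness.} For $\mathbf{y}$ in the domain, $(G_1\bullet S_1\oplus G_2\bullet S_2)(\mathbf{y})$ is continuous on $\mathbb{R}$, as a sum of products of continuous functions. It is bounded by $\|G_1\|\rho+\|G_2\|\rho$. Likewise $(S_1\odot S_2)(\mathbf{y})$ is continuous with norm at most $\rho^2$. So both maps take values in $C_b^0(\mathbb{R})$, and more precisely in $C^0_{b,\rho'}(\mathbb{R})$ with $\rho'=(\|G_1\|+\|G_2\|)\rho$ or $\rho'=\rho^2$. The target ball is therefore enlarged, which is harmless and is how I interpret ``so are''.

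\textbf{Continuity of $(G_1\bullet S_1)\oplus(G_2\bullet S_2)$.} Fix $\mathbf{y}$ and let $\mathbf{y}'$ differ from it only in the first $k$ entries. Then
\[
\|(G_1\bullet S_1\oplus G_2\bullet S_2)(\mathbf{y})-(G_1\bullet S_1\oplus G_2\bullet S_2)(\mathbf{y}')\|\le \|G_1\|\,\|S_1(\mathbf{y})-S_1(\mathbf{y}')\|+\|G_2\|\,\|S_2(\mathbf{y})-S_2(\mathbf{y}')\|.
\]
By hypothesis the right-hand side tends to $0$ as $\mathbf{y}'\to\mathbf{y}$.

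\textbf{Continuity of $S_1\odot S_2$.} Use the add-and-subtract identity $ab-a'b'=a(b-b')+(a-a')b'$. This gives
\[
\|S_1(\mathbf{y})S_2(\mathbf{y})-S_1(\mathbf{y}')S_2(\mathbf{y}')\|\le \rho\,\|S_2(\mathbf{y})-S_2(\mathbf{y}')\|+\rho\,\|S_1(\mathbf{y})-S_1(\mathbf{y}')\|,
\]
which again tends to $0$.

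The only point needing care is the uniform bound $\rho$ on the values of $S_1,S_2$, together with $G_i\in C_b^0(\mathbb{R})$. These are exactly what make the supremum estimates finite and uniform in $x\in\mathbb{R}$. Without them, pointwise multiplication would not be continuous in the sup norm on unbounded functions. So there is no real obstacle beyond this observation.
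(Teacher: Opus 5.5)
Your proof is correct and is essentially the paper's argument: the same bound by $\|G_1\|$ and $\|G_2\|$ for $(G_1\bullet S_1)\oplus(G_2\bullet S_2)$ and the same add-and-subtract bound with factor $\rho$ for $S_1\odot S_2$, with the last entry $\phi_k$ held fixed as in the paper. Your observation that the combined maps take values in the enlarged ball $C^0_{b,\rho'}(\mathbb{R})$ makes explicit something the paper leaves implicit, and it is what allows the lemma to be applied repeatedly.
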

\begin{proof}[Proof]
For any $(\phi,\phi_1,\cdot \cdot \cdot,\phi_{k-1},\phi_{k}), (\tilde{\phi},\tilde{\phi}_1,\cdot \cdot \cdot,\tilde{\phi}_{k-1},\phi_{k}) \in C_{b}^{0,1}(\mathbb{R},L) \times C_{b,\rho_1}^0(\mathbb{R})\times \cdot \cdot \cdot \times C_{b,\rho_k}^0(\mathbb{R}) ,$ we have 
\begin{align*}
&\|\mathcal{G}_1 \cdot S_1(\phi,\phi_1,\cdot \cdot \cdot,\phi_{k-1},\phi_{k})+\mathcal{G}_2 \cdot S_2(\phi,\phi_1,\cdot \cdot \cdot,\phi_{k-1},\phi_{k})\\
&\qquad-\mathcal{G}_1 \cdot S_1(\tilde{\phi},\tilde{\phi}_1,\cdot \cdot \cdot,\tilde{\phi}_{k-1},\phi_{k})-\mathcal{G}_2 \cdot S_2(\tilde{\phi},\tilde{\phi}_1,\cdot \cdot \cdot,\tilde{\phi}_{k-1},\phi_{k})\|\\
&\le \|\mathcal{G}_1\|\cdot\|S_1(\phi,\phi_1,\cdot \cdot \cdot,\phi_{k-1},\phi_{k})-S_1(\tilde{\phi},\tilde{\phi}_1,\cdot \cdot \cdot,\tilde{\phi}_{k-1},\phi_{k})\|\\
&\qquad+\|\mathcal{G}_2\|\cdot \|S_2(\phi,\phi_1,\cdot \cdot \cdot,\phi_{k-1},\phi_{k})-S_2(\tilde{\phi},\tilde{\phi}_1,\cdot \cdot \cdot,\tilde{\phi}_{k-1},\phi_{k})\|,
\end{align*}
which directly implies $(G_1 \bullet S_1) \oplus
 (G_2\bullet S_2)$ is continuous  with respect to the first $k$ variables by assumption,
and 
\begin{align*}
&\|S_1(\phi,\phi_1,\cdot \cdot \cdot,\phi_{k-1},\phi_{k}) \cdot S_2(\phi,\phi_1,\cdot \cdot \cdot,\phi_{k-1},\phi_{k})\\
&\qquad-S_1(\tilde{\phi},\tilde{\phi}_1,\cdot \cdot \cdot,\tilde{\phi}_{k-1},\phi_{k}) \cdot S_2(\tilde{\phi},\tilde{\phi}_1,\cdot \cdot \cdot,\tilde{\phi}_{k-1},\phi_{k})\|\\
&\le \|S_1(\phi,\phi_1,\cdot \cdot \cdot,\phi_{k-1},\phi_{k}) \cdot S_2(\phi,\phi_1,\cdot \cdot \cdot,\phi_{k-1},\phi_{k})\\
&\qquad -S_1(\tilde{\phi},\tilde{\phi}_1,\cdot \cdot \cdot,\tilde{\phi}_{k-1},\phi_{k}) \cdot S_2(\phi,\phi_1,\cdot \cdot \cdot,\phi_{k-1},\phi_{k})\|\\
& \qquad +\|S_1(\tilde{\phi},\tilde{\phi}_1,\cdot \cdot \cdot,\tilde{\phi}_{k-1},\phi_{k}) \cdot S_2(\phi,\phi_1,\cdot \cdot \cdot,\phi_{k-1},\phi_{k})\\
&\qquad-S_1(\tilde{\phi},\tilde{\phi}_1,\cdot \cdot \cdot,\tilde{\phi}_{k-1},\phi_{k}) \cdot S_2(\tilde{\phi},\tilde{\phi}_1,\cdot \cdot \cdot,\tilde{\phi}_{k-1},\phi_{k})\|\\
& \le \|S_2(\phi,\phi_1,\cdot \cdot \cdot,\phi_{k-1},\phi_{k})\| \cdot \|S_1(\phi,\phi_1,\cdot \cdot \cdot,\phi_{k-1},\phi_{k})-S_1(\tilde{\phi},\tilde{\phi}_1,\cdot \cdot \cdot,\tilde{\phi}_{k-1},\phi_{k})\|
\\
&\qquad +
\|S_1(\tilde{\phi},\tilde{\phi}_1,\cdot \cdot \cdot,\tilde{\phi}_{k-1},\phi_{k})\|\|S_2(\phi,\phi_1,\cdot \cdot \cdot,\phi_{k-1},\phi_{k})-S_2(\tilde{\phi},\tilde{\phi}_1,\cdot \cdot \cdot,\tilde{\phi}_{k-1},\phi_{k})\|\\
&\le \rho \|S_1(\phi,\phi_1,\cdot \cdot \cdot,\phi_{k-1},\phi_{k})-S_1(\tilde{\phi},\tilde{\phi}_1,\cdot \cdot \cdot,\tilde{\phi}_{k-1},\phi_{k})\|\\
&\qquad+ \rho\|S_2(\phi,\phi_1,\cdot \cdot \cdot,\phi_{k-1},\phi_{k})-S_2(\tilde{\phi},\tilde{\phi}_1,\cdot \cdot \cdot,\tilde{\phi}_{k-1},\phi_{k})\|,
\end{align*}
which gives rise to that $S_1\odot S_2$ is continuous with respect to the first $k$ variables by assumption. The proof is complete.
\end{proof}

\begin{lemma} \label{coninuous of fg}
Under the assumption of Theorem \ref{th}, for every $j=1,2,\cdots,n,$ the maps $\mathcal{F}_j:  C^{0,1}_b(\mathbb{R};L) \to C^0_b(\mathbb{R})$ defined by \[\mathcal{F}_j(\phi):=(h^{-1})^{(j)}\circ (\phi^{2} -g )\circ f^{-1} \qquad \text{for every } \phi\in C^{0,1}_b(\mathbb{R};L),  \]
where $(h^{-1})^{(j)}$ denotes the $j$-th derivative, the map $\mathcal{G}: C^0_{b,\rho }(\mathbb{R})  \times C^{0,1}_b(\mathbb{R};L) \to  C^0_{b,\rho }(\mathbb{R})$ defined by 
\[\mathcal{G} (\psi,\phi):= \psi \circ \phi \circ f^{-1} \qquad  \]
for every $(\psi,\phi) \in C^0_{b,\rho }(\mathbb{R})\times C^{0,1}_b(\mathbb{R};L)$, and the map $\mathcal{B}:  C^0_{b,\rho }(\mathbb{R}) \to  C^0_{b,\rho }(\mathbb{R})$ defined by
$$ \mathcal{B}(\phi) := \phi \circ f^{-1} \qquad\text{ for every } \phi \in C_{b,\rho}^{0}(\mathbb{R}), $$
are all continuous.

\end{lemma}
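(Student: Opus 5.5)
The plan is to first record the regularity of the fixed maps $h^{-1}$ and $f^{-1}$ that follows from the hypotheses of Theorem \ref{th}. Since $h,f$ are $C^n$ with $|h'|\ge m>1$ and $|f'|\ge\alpha>0$ on $\mathbb{R}$, the derivatives $h',f'$ have constant sign. Hence $h$ and $f$ are strictly monotone with $|h(x)-h(y)|\ge m|x-y|$, so they are surjective $C^n$ diffeomorphisms of $\mathbb{R}$. Their inverses are $C^n$ and satisfy $\mathrm{Lip}(h^{-1})\le 1/m$ and $\mathrm{Lip}(f^{-1})\le 1/\alpha$. I then use Fa\`a di Bruno's formula (Lemma \ref{FDB}), or simply the recursive quotient formula $(h^{-1})'=1/h'\circ h^{-1}$, to express $(h^{-1})^{(j)}$ as a polynomial in $h^{(i)}\circ h^{-1}$ ($1\le i\le j$) divided by a power of $h'\circ h^{-1}$. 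By \eqref{oo} and $|h'|\ge m$, each $(h^{-1})^{(j)}$, $1\le j\le n$, is bounded and continuous. For $j\le n-1$ it is moreover Lipschitz, since its derivative $(h^{-1})^{(j+1)}$ is bounded.

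For $\mathcal{B}$: if $\phi$ is bounded and continuous, then so is $\phi\circ f^{-1}$, with $\|\phi\circ f^{-1}\|=\|\phi\|\le\rho$, because $f^{-1}$ is a bijection. Also $\|\mathcal{B}(\phi)-\mathcal{B}(\tilde\phi)\|=\|\phi-\tilde\phi\|$, so $\mathcal{B}$ is an isometry. For $\mathcal{G}$, I would split
\[
\psi\circ\phi\circ f^{-1}-\tilde\psi\circ\tilde\phi\circ f^{-1}=(\psi-\tilde\psi)\circ\phi\circ f^{-1}+(\tilde\psi\circ\phi-\tilde\psi\circ\tilde\phi)\circ f^{-1}.
\]
The first term has norm at most $\|\psi-\tilde\psi\|$. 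For the second term, continuity in $\phi$ at fixed $\tilde\psi$ holds when $\tilde\psi$ is uniformly continuous, giving a bound by the modulus of continuity of $\tilde\psi$ evaluated at $\|\phi-\tilde\phi\|$. This is the \emph{main obstacle}: a general element of $C^0_{b,\rho}(\mathbb{R})$ need not be uniformly continuous, so joint continuity in the sup norm may fail. To handle it, I would prove continuity at each point $(\psi_0,\phi_0)$ in the form actually needed later, namely with respect to $\phi$ for each fixed $\psi$ (the Fiber Contraction Theorem, Lemma \ref{fct}, only requires continuity of $\Phi_k(\cdot,y_{k\infty})$ at the fixed point), together with the uniform $1$-Lipschitz dependence on $\psi$. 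If needed, I would restrict to $\psi$ that are uniformly continuous, which is the case when $\psi$ is itself a derivative arising in the iteration, or argue with $\phi_0$ bounded: $\phi_0(\mathbb{R})$ lies in a compact interval $[-R,R]$, and $\tilde\psi$ is uniformly continuous on $[-R-1,R+1]$, which suffices for $\|\phi-\phi_0\|<1$. This last observation resolves the obstacle without extra assumptions, since all $\phi$ are bounded.

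For $\mathcal{F}_j$, first note that $\phi^2-g$ is continuous and bounded, because $\phi$ and $g$ are bounded. Moreover, $\|\phi^2-\tilde\phi^2\|\le L\|\phi-\tilde\phi\|+\|\phi-\tilde\phi\|$, since $|\phi(\phi(x))-\tilde\phi(\tilde\phi(x))|\le|\phi(\phi(x))-\phi(\tilde\phi(x))|+|\phi(\tilde\phi(x))-\tilde\phi(\tilde\phi(x))|$, using $\mathrm{Lip}(\phi)\le L$. Hence $\phi\mapsto\phi^2-g$ is Lipschitz into $C^0_b(\mathbb{R})$, with values contained in a bounded set $[-R,R]$ when $\phi$ ranges near a fixed $\phi_0$. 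Composing with $(h^{-1})^{(j)}$, which is uniformly continuous on $[-R-1,R+1]$ (in fact Lipschitz for $j<n$), and then precomposing with $f^{-1}$, which preserves sup norms, yields continuity of $\mathcal{F}_j$. Boundedness of the values follows from the boundedness of $(h^{-1})^{(j)}$, and this completes the argument.
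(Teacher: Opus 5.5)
Your proposal is correct and follows essentially the same route as the paper. For $\mathcal{G}$ it uses the splitting $\|\psi\circ\phi\circ f^{-1}-\tilde\psi\circ\tilde\phi\circ f^{-1}\|\le\|\psi-\tilde\psi\|+\|\tilde\psi\circ\phi\circ f^{-1}-\tilde\psi\circ\tilde\phi\circ f^{-1}\|$ together with uniform continuity of $\tilde\psi$ on $[-\|\tilde\phi\|-1,\|\tilde\phi\|+1]$; for $\mathcal{F}_j$ it uses $\|\phi^2-\phi_0^2\|\le(L+1)\|\phi-\phi_0\|$ together with uniform continuity of $(h^{-1})^{(j)}$ on a compact interval containing the ranges. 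Your compact-interval observation already gives full joint continuity of $\mathcal{G}$, so the ``obstacle'' and the fallbacks (continuity only in $\phi$, or restricting to uniformly continuous $\psi$) are unnecessary, and your explicit check that $(h^{-1})^{(j)}$ is bounded, which the paper leaves implicit, is a harmless addition.
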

\begin{proof}
 First, we prove that $\mathcal{F}_j: C^{0,1}_b(\mathbb{R};L) \to C^0_b(\mathbb{R})$ is continuous for each $j=1,2,\cdots,n$. For any $\phi_0 \in C_{b}^{0,1}(\mathbb{R};L)$, we need to prove that, for any $\epsilon >$ 0, there exists $\delta >0$, such that 
  $$\|(h^{-1})^{(j)} \circ (\phi^{2} \circ f^{-1}-g \circ f^{-1})-(h^{-1})^{(j)} \circ (\phi_0^{2} \circ f^{-1}-g \circ f^{-1})\|<\epsilon$$ whenever $\|\phi-\phi_0\| < \delta$.
  Since the functions $(h^{-1})^{(j)}$ is continuous on $\mathbb{R}$, it is uniformly continuous on the bounded closed interval $I:= [-\|\phi_0\|-\|g\|-1,\|\phi_0\|+\|g\|+1]$. Thus, there exists a $\delta_0  \in (0,1)$ such that
  \begin{equation}\label{unifrom-conti-h}
   |(h^{-1})^{(j)}(x_1)-(h^{-1})^{(j)}(x_2)|< \frac{\epsilon}{2} \text{ whenever } |x_1-x_2|< \delta_0 \text{ and } x_1,x_2 \in I.   
  \end{equation}
  When $\|\phi-\phi_0\| < \frac{\delta_0}{L+1},$ we see that
  \begin{align}\label{pp}
|\phi^{2} \circ f^{-1}(x)-g \circ f^{-1}(x)| &\le \sup_{x \in \mathbb{R}}|\phi^{2} \circ f^{-1}(x)-g \circ f^{-1}(x)|\nonumber
\\
&=
\|\phi^{2} \circ f^{-1}-g \circ f^{-1}\| \nonumber\\
&\le \|\phi\| +\|g\| \le \|\phi-\phi_0\| +\|\phi_0\|+\|g\|
\nonumber\\
& \le \frac{\delta_0}{L+1} +\| \phi_0 \|+\|g\|
\nonumber\\
&\le 1+\| \phi_0 \|+\|g\| \qquad  \text{ for all } x \in \mathbb{R},
  \end{align}
that is, $ \phi^{2} \circ f^{-1}(x)-g \circ f^{-1}(x) \in I$ for all $ x \in \mathbb{R}$; and that
   \begin{align}\label{mm}
  |\phi^{2} \circ f^{-1}(x)- \phi_0^{2} \circ f^{-1}(x)| &\le \|\phi^{2}-\phi_0^{2}\| \le \|\phi^{2}-\phi \circ \phi_0\| +\|\phi \circ \phi_0-\phi_0^{2}\| \nonumber\\ &\le (L+1)\|\phi-\phi_0\| < \delta_0 \qquad  \text{ for all } x \in \mathbb{R}.
  \end{align}
  Consequently,  when $\|\phi-\phi_0\| < \frac{\delta_0}{L+1},$ by \eqref{unifrom-conti-h}, \eqref{pp} and \eqref{mm}, 
  \begin{align*}
  &\|(h^{-1})^{(j)}\circ (\phi^{2} \circ f^{-1}-g \circ f^{-1})-(h^{-1})^{(j)}\circ (\phi_0^{2}\circ f^{-1}-g \circ f^{-1})\| \\
  &= \sup_{x \in \mathbb{R}}|(h^{-1})^{(j)}\circ (\phi^{2} \circ f^{-1}(x)-g \circ f^{-1}(x))-(h^{-1})^{(j)}\circ (\phi_0^{2} \circ f^{-1}(x)-g \circ f^{-1}(x))|\\
  &\le \frac{\epsilon}{2} < \epsilon.
  \end{align*}
  The continuity of $\mathcal{F}_j(\phi), j=1,2,\cdot \cdot \cdot,n,$ is proved.  
  Next, we prove that $\mathcal{G} (\psi,\phi)$ is continuous. For any $(\tilde{\psi},\tilde{\phi}) \in C_{b,\rho}^{0} (\mathbb{R}) \times C_{b}^{0,1} (\mathbb{R};L)$, we need to prove that, for any $\epsilon >$ 0, there exists $\delta >0$, such that
  $$\|\psi \circ \phi \circ f^{-1}-\tilde{\psi} \circ \tilde{\phi} \circ f^{-1}\| < \epsilon \text{ whenever } d((\psi,\phi),(\tilde{\psi},\tilde{\phi})):=\max\{\| \phi-\tilde \phi\|,\| \psi-\tilde \psi\|\}<\delta.$$
Since 
\begin{align}
    &\|\psi \circ \phi \circ f^{-1}-\tilde{\psi} \circ \tilde{\phi} \circ f^{-1}\|\notag\\ 
    &\le \|\psi \circ \phi \circ f^{-1}-\tilde{\psi} \circ \phi \circ f^{-1}\|+\|\tilde{\psi} \circ \phi \circ f^{-1}-\tilde{\psi} \circ \tilde{\phi} \circ f^{-1}\|\notag\\
    &\le \|\psi-\tilde \psi\|+\|\tilde{\psi} \circ \phi \circ f^{-1}-\tilde{\psi} \circ \tilde{\phi} \circ f^{-1}\|. \notag
\end{align}
Thus prove the continuity of $\mathcal{G}$ is complete under $\tilde{\psi} \circ \phi \circ f^{-1}$ is continuous. Next, we just need to prove $\tilde{\psi} \circ \phi \circ f^{-1}$ is continuous. In other words, for every $\epsilon >0$, there exists $\delta >0,$ such that
$$\|\tilde{\psi} \circ \phi \circ f^{-1}-\tilde{\psi} \circ \tilde{\phi} \circ f^{-1}\| < \epsilon \text{ whenever } \|\phi-\tilde \phi\|<\delta.$$
Since the function $\tilde \psi$ is continuous on $\mathbb{R}$, it is uniformly continuous on bounded closed interval $I: =[-\|\tilde{\phi}\|-1,\|\tilde{\phi}\|+1]$. That is to say, for any $\epsilon >0$, there exists $0<\delta_0 <1$, such that
\begin{equation}\label{A}
    |\tilde \psi(x_1)-\tilde \psi(x_2)| < \frac{\epsilon}{2} \text{ whenever } |x_1-x_2|<\delta_0  \text{ and } x_1,x_2 \in I.\\
\end{equation}
In that case, when $\|\phi-\tilde{\phi}\| < \delta_0$,
\begin{equation}\label{AA}
    |\phi \circ f^{-1}(x)| \le \|\phi\| \le \|\phi-\tilde{\phi}\|+\|\tilde{\phi}\| < 1+ \|\tilde{\phi}\|
\qquad \text{ for all } x \in \mathbb{R},
\end{equation}
that is, $\phi \circ f^{-1}(x) \in I$  \qquad \text{ for all } $x \in \mathbb{R};$
and that 
\begin{equation}\label{AAA}
    |\phi \circ f^{-1}(x)-\tilde{\phi} \circ f^{-1}(x)| \le \|\phi-\tilde{\phi}\| < \delta_0 \qquad \text{ for all } x \in \mathbb{R}.
\end{equation}
Thus, when $\|\phi-\tilde{\phi}\| < \delta_0, $ by \eqref{A},\eqref{AA} and \eqref{AAA}, 

$\|\tilde \psi \circ \phi \circ f^{-1}-\tilde \psi \circ \tilde{\phi} \circ f^{-1}\| =\sup_{x \in \mathbb{R}}|\tilde\psi \circ \phi \circ f^{-1}(x)-\tilde\psi \circ \tilde{\phi} \circ f^{-1}(x)| \le \frac{\epsilon}{2} < \epsilon.$
The continuity of $\mathcal{G} (\psi,\phi)$ is proved. 
Similarly, we are able to show that $\mathcal{B}$ is also continuous with respect to $\phi$. The proof is complete.
\end{proof}

\section{Proof of Theorem \ref{th}}\label{proof}

\begin{proof}[Proof of Theorem \ref{th}]
First, we transfer equivalently equation \eqref{o4} to a new equation
\begin{equation}\label{11}
\phi(x)=h^{-1}(\phi^{2}(f^{-1}(x))-g(f^{-1}(x))),~ x\in \mathbb{R}.
\end{equation}
It suffices to show that the functions $h$ and $f$ are bijections on $\mathbb{R}$. 
In fact, by (\ref{o}), if $h'(x)\geq k>1>0$, then $h$  is strictly increasing on $\mathbb{R}$. 
This means that $h$ is injective on $\mathbb{R}$.
For any $x > 0$, by the Lagrange mean value theorem, we have
$h(x) - h(0) = h'(\xi) \cdot x \geq kx$, where $\xi $ is between $ 0 $ and $ x $. As $x \to +\infty $,  clearly $ kx + h(0)$ goes to $+\infty$ since $k>1>0$, which implies that $h(x) \geq kx + h(0)$ also goes to $+\infty$.
For any $ x < 0 $, we see that
$h(x) - h(0) = h'(\xi) \cdot x \leq kx$. Thus, we have $h(x)\to -\infty$ as  $ x \to -\infty$.
By the mean Value Theorem, $h$ is surjective from $\mathbb{R}$ to $\mathbb{R}$. This shows that $h:\mathbb{R}\to \mathbb{R}$ is a bijection. Similarly, if $h'(x)<-1<0$ for all $x\in \mathbb{R}$, we can also show that $h$ is a bijection from $\mathbb{R}$ to $\mathbb{R}$. It is similar to prove that $f:\mathbb{R}\to \mathbb{R}$ is a bijection.

We define a bundle map
\begin{equation}\label{L}
\Gamma:C_{b}^{0,1}(\mathbb{R};L)\times  C_{b,\rho_1}^0(\mathbb{R}) \times  \dots \times C_{b,\rho_n}^0(\mathbb{R})\rightarrow C_{b}^{0,1}(\mathbb{R};L) \times C_{b,\rho_1}^0(\mathbb{R})\times\dots\times C_{b,\rho_n}^0(\mathbb{R})
\end{equation}
by
$$\Gamma(\phi,\phi_{1},\phi_{2},\cdot\cdot \cdot,\phi_{n})=(\Lambda(\phi),\Psi_1(\phi,\phi_1),\Psi_2(\phi,\phi_1,\phi_2),\cdot\cdot \cdot,\Psi_n(\phi,\phi_1,\dots,\phi_n))$$
for $(\phi,\phi_{1},\phi_{2},\dots,\phi_n)\in C_{b}^{0,1}(\mathbb{R};L)\times  C_{b,\rho_1}^0(\mathbb{R}) \times  C_{b,\rho_2}^0(\mathbb{R})\times \dots\times C_{b,\rho_n}^0(\mathbb{R}),$
where $\Lambda:C_{b}^{0,1}(\mathbb{R};L)\rightarrow C_{b}^{0,1}(\mathbb{R};L)$ is defined by
\begin{equation}\label{qq}
\Lambda(\phi):=h^{-1}\circ(\phi^{2}-g)  \circ f^{-1} \qquad \text{~for all ~~}~~\phi \in C_{b}^{0,1}(\mathbb{R};L),
\end{equation}
and with $$\Lambda_k(\phi,\phi',\dots,\phi^{(k)}):=(\Lambda(\phi))^{(k)} \qquad\text{for each } 1 \le k \le n,$$ 
$\Psi_k(\phi,\phi_1,\dots,\phi_k)$ is defined by replacing $\phi',\phi'',\dots,\phi^{(k)}$ with $\phi_1,\phi_2,...,\phi_k$ in \newline $\Lambda_k(\phi,\phi',\dots,\phi^{(k)})$ for all $(\phi,\phi_1,\cdot\cdot \cdot,\phi_k) \in C_{b}^{0,1}(\mathbb{R};L)\times  C_{b,\rho_1}^0(\mathbb{R}) \times\dots\times  C_{b,\rho_k}^0(\mathbb{R}).$ 
Namely, according to Fa\`a di Bruno’s formula given in Lemma \ref{FDB}, for each  $1 \le k \le n$,
$\Psi_k:C_{b}^{0,1}(\mathbb{R};L)\times  C_{b,\rho_1}^0(\mathbb{R})\times\cdot\cdot \cdot\times C_{b,\rho_k}^0(\mathbb{R})  \rightarrow  C_{b,\rho_k}^0(\mathbb{R}) $ is defined by 
\begin{align}
&\Psi_k(\phi,\phi_1,\cdot\cdot \cdot,\phi_k):=\Lambda_k(\phi,\phi_1,\cdot\cdot \cdot,\phi_k)     =(h^{-1})'\circ (\phi^{2}  -g)\circ f^{-1} \cdot \notag\\
& \qquad \bigg( (\phi_k \circ \phi \circ f^{-1} )\cdot (\phi_1 \circ f^{-1})^k + (\phi_1\circ \phi \circ f^{-1}) \cdot (\phi_k \circ f^{-1})\bigg) \cdot ((f^{-1})')^k\notag
\\
& \qquad +H_k(\phi,\phi_1,\phi_2,\cdots,\phi_{k-1}), \label{psik}
\end{align}
where
\[H_k(\phi,\phi_1,\phi_2,\cdots,\phi_{k-1})\]
is finite operations of addition, subtraction, multiplication and composition among $h^{-1}, g, f^{-1},$ and their derivatives as well as $\phi, \phi_1,\cdots,\phi_{k-1}$. Namely,
\begin{equation}\label{H_k}
   \begin{aligned}
 &H_k (\phi,\phi_1,\phi_2,\cdots,\phi_{k-1}) := \\
 &(h^{-1})'\circ (\phi^{2} -g)\circ f^{-1} \cdot \bigg\{ \phi_1 \circ \phi  \circ f^{-1} \cdot \bigg( \sum_{h_1+2h_2+\dots+kh_k=k \atop h_1 \neq k} \frac{k!}{h_1!h_2!\dots h_k!} \phi_{h_1+h_2+\dots +h_k} \circ \\
 &f^{-1} \cdot\prod_{m=1}^k\big(\frac{( f^{-1} )^{(m)}}{m!}\big)^{h_m} \bigg)  -(g\circ f^{-1})^{(k)}  + \\
 &\sum_{t_1+2t_2+\dots+(k-1)t_{k-1}=k-1 }\frac{(k-1)!}{t_1!t_2!\dots t_{k-1}!(1!)^{t_1}(2!)^{t_2}\dots{((k-1)!)}^{t_{k-1}}} \phi_{t_1+t_2+\dots +t_{k-1}}\circ \\
 &\phi \circ f^{-1} \cdot
  \prod_{j=1}^{k-1}\bigg(   \sum_{r_1+2r_2+\dots+jr_j=j }\frac{j!}{r_1!r_2!\dots r_j!} \phi_{r_1+r_2+\dots +r_j} \circ f^{-1} \cdot \prod_{p=1}^j\big(\frac{ ( f^{-1} )^{(p)}}{p!}\big)^{r_p}       \bigg)^{t_j} \bigg\}   +  \\
&\sum_{l_1+2l_2+\dots+(k-1)l_{k-1}=k-1 }\frac{(k-1)!}{l_1!l_2!\dots l_{k-1}!(1!)^{l_1}(2!)^{l_2}\dots((k-1)!)^{l_{k-1}}} \cdot (h^{-1})^{(l_1+l_2+\dots +l_{k-1})}\circ\\
&(\phi^{2} - g)\circ f^{-1} \cdot 
  \prod_{i=1}^{k-1}\bigg\{   \sum_{s_1+2s_2+\dots+is_i=i }\frac{i!}{s_1!s_2!\dots s_i!(1!)^{s_1}(2!)^{s_2}\dots (i!)^{s_i}} \phi_{s_1+s_2+\dots +s_i} \circ \\
  &\phi \circ f^{-1} \cdot  \prod_{u=1}^i\bigg(  \sum_{v_1+2v_2+\dots+uv_u=u }\frac{u!}{v_1!v_2!\dots v_u!} \phi_{v_1+v_2+\dots +v_u} \circ f^{-1} \cdot \prod_{w=1}^u(\frac{(f^{-1})^{(w)}}{w!})^{v_w}    \bigg)^{s_u} \\
  &-(g\circ f^{-1})^{(i)}\bigg\}^{l_i}   
   \end{aligned}
\end{equation}
for all $(\phi,\phi_1,\cdot\cdot \cdot,\phi_k) \in C_{b}^{0,1}(\mathbb{R};L)\times  C_{b,\rho_1}^0(\mathbb{R})\times\cdot\cdot \cdot\times C_{b,\rho_k}^0(\mathbb{R}).$

In (\ref{L}),  $L$ is chosen to satisfy
\begin{equation}\label{3}
    \begin{cases}
        \frac{m\alpha-\sqrt{{m}^{2}
\alpha^{2}-4\beta}}{2} \le &L \le \frac{m\alpha
+\sqrt{{m}^{2}\alpha^{2}-4\beta}}{2},\\
&L < m-1,
    \end{cases}
\end{equation}
$\rho_1$ is chosen to satisfy
\begin{equation}\label{44}
    \frac{m\alpha-\sqrt{m^2\alpha^2-4\beta}}{2}\le \rho_1 \le \frac{m\alpha+\sqrt{m^2\alpha^2-4\beta}}{2} ,
 \rho_1 < \frac{m\alpha}{2}
\end{equation}
and
\begin{equation}\label{55}
    \begin{cases}
\rho_1^2+\rho_1-m\alpha^2&<0,\\
\rho_1^3+\rho_1-m\alpha^3&<0,\\
\cdot \cdot \cdot \\
\rho_1^n+\rho_1-m\alpha^n&<0,
\end{cases}
\end{equation}
as well as for each $2\le k\le n$,  $\rho_k$ is chosen to satisfy
\begin{align}\label{8}
\rho_k\ge \frac{m\alpha^k\lambda_k}{m\alpha^k-{\rho_1}^k-\rho_1}.
\end{align}
It can be seen that such an $L$ exists as long as $\beta$ is sufficiently small. Similarly, there exists a $\rho_1$ that satisfies \eqref{44} and \eqref{55}  provided that $\beta$ is sufficiently small. Thus, for each $2\le k\le n$, a $\rho_k$ satisfying \eqref{8} exists.

We claim that the maps $\Lambda$ and $\Psi_k, 1\le k\le n,$ are well defined by ($\ref{qq}$) and ($\ref{psik}$) respectively if $L$ and $\rho_k(k=1,2,\cdot \cdot \cdot,n)$ satisfy conditions (\ref{3})--(\ref{8}).
In fact, 
it is shown in \cite{tx} that $\Lambda $ maps $C_{b}^{0,1}(\mathbb{R};L)$ into itself if (\ref{3}) holds, which means that $\Lambda$ is well defined. Moreover, proved in \cite{tx},  $\Psi_1$ maps $C_{b}^{0,1}(\mathbb{R};L) \times C_{b,\rho_1}^0(\mathbb{R})  $ into $C_{b,\rho_1}^0(\mathbb{R})$, that is, $\Psi_1$ is well defined.
Evidently,
  $\Psi_k(\phi,\phi_1,\phi_2,\cdot\cdot \cdot,\phi_k)$ is continuous on $\mathbb{R}$ for each $2\le k\le n$ and every $(\phi,\phi_1,\phi_2,\cdot\cdot \cdot,\phi_k) \in C_{b}^{0,1}(\mathbb{R};L) \times C_{b,\rho_1}^{0}(\mathbb{R}) \times C_{b,\rho_2}^{0}(\mathbb{R})\times\cdot\cdot \cdot\times C_{b,\rho_k}^{0}(\mathbb{R})$.
  
  Moreover, note that, shown in \eqref{H_k}, $H_k$ contains $\phi, \phi_1,\cdots,$ and $\phi_{k-1}$, not containing $\phi_k$. Since $\|\phi_i\| \le \rho_i, i=1,2,...,k-1,$ and the derivatives $(h^{-1})^{(i)}, (f^{-1})^{(i)}$ and $g^{(i)}$, $i=1,\cdots,k$, are all bounded under the hypotheses, all operations in \eqref{H_k} preserve boundedness, there exists a constant $\lambda_k<\infty$ which 
   depends only on $\rho_1,\cdots,\rho_{k-1},  \|(h^{-1})^{(i)}\|$ and $\|(f^{-1})^{(i)}\|, i=1,2,...,k,$ such that
   \[ \|H_k(\phi,\phi_1,\phi_2,\cdots,\phi_{k-1})\| \le \lambda_k. \]
Consequently, it follows by (\ref{o}) and \eqref{8}  that
\begin{align}
        &\|\Psi_k(\phi,\phi_1,\phi_2,\cdot\cdot \cdot,\phi_k)\| \le \|(h^{-1})'\circ (\phi^{2}  -g)\circ f^{-1}\| \cdot  \notag\\
        &\bigg\|(\phi^{(k)} \circ \phi \circ f^{-1}) \cdot  (\phi' \circ f^{-1})^k + (\phi'\circ \phi \circ f^{-1}) \cdot (\phi^{(k)} \circ f^{-1}) \bigg\| \cdot \|((f^{-1})')^k\| \notag \\
        &+ \|H_k(\phi,\phi_1,\phi_2,\cdots,\phi_{k-1})\| \notag\\
        &\le \frac{\rho_k\rho_1^k+\rho_k\rho_1}{m\alpha^k}+\lambda_k \notag\\
        &\le \rho_k, \qquad \text{ (because $\rho_k$ is chosen to satisfy \eqref{8}) } \notag
\end{align}
which shows that $\Psi_k$ is well defined for each $2\le k\le n$ from $C_{b}^{0,1}(\mathbb{R};L)\times  C_{b,\rho_1}^0(\mathbb{R}) \times  C_{b,\rho_2}^0(\mathbb{R})\times\cdot\cdot \cdot\times C_{b,\rho_k}^0(\mathbb{R}) $ to $ C_{b,\rho_k}^0(\mathbb{R}).$

We further have the following three assertions, whose proofs will be given after finishing the proof.
\begin{description}
    \item[(A1)] \label{a1} $\Lambda$ is contraction on $C_{b}^{0,1}(\mathbb{R};L).$ 
    \item[(A2)] $\Psi_k(k=1,2,\cdot\cdot \cdot,n)$ is a uniform contraction with respect to the first $k$ variables.
    \item[(A3)]  $\Psi_k(\cdot, \phi_{k\ast}):  C_{b}^{0,1}(\mathbb{R};L)\times  C_{b,\rho_1}^0(\mathbb{R}) \times  C_{b,\rho_2}^0(\mathbb{R})\times\cdot\cdot \cdot\times C_{b,\rho_{k-1}}^0(\mathbb{R}) \rightarrow C_{b,\rho_{k}}^0(\mathbb{R})(k=1,2,\cdot\cdot \cdot,n)$ is continuous, where $\phi_{k\ast}$ denotes the component at the 
$(k+1)$-th position in the fixed point $(\phi_{\ast}, \phi_{1\ast}, \phi_{2\ast}, \dots, \phi_{n\ast})$ of $\Gamma$ obtained by Lemma \ref{fct}.
\end{description}
Thus, by Lemma \ref{fct}, this fixed point of $\Gamma$ is globally attractive. In other words, for every $(\phi,\phi_{1},\phi_{2},\cdot\cdot \cdot,\phi_{n}) \in C_{b}^{0,1}(\mathbb{R};L) \times C_{b,\rho_1}^0(\mathbb{R}) \times C_{b,\rho_2}^0(\mathbb{R})\times\cdot\cdot \cdot\times C_{b,\rho_n}^0(\mathbb{R}),$ 
\begin{equation}\label{w}
    \text{ $ \Gamma^{m}(\phi,\phi_{1},\phi_{2},\cdot\cdot \cdot,\phi_{n})$ converges to $(\phi_{\ast},\phi_{1\ast},\phi_{2\ast},\cdot\cdot \cdot,\phi_{n\ast})$ as $m$ $\rightarrow +\infty.$ } 
\end{equation}
Choose arbitrarily $\phi_{0} \in C_{b}^{0,1}(\mathbb{R};L),\phi_{10} \in C_{b,\rho_1}^0(\mathbb{R}),\cdot\cdot \cdot,\phi_{n0} \in C_{b,\rho_n}^0(\mathbb{R})$ such that $\phi_{10}=\phi_{0}'$, $\phi_{20}=\phi_{0}'',\cdot\cdot \cdot,\phi_{n0}=\phi_{0}^{(n)}.$
 Let 
\begin{equation}\label{wwww}
    (\phi_{m},\phi_{1m},\phi_{2m},\cdot\cdot \cdot,\phi_{nm}):=\Gamma^{m}(\phi_{0},\phi_{10},\phi_{20},\cdot\cdot \cdot,\phi_{n0}).
\end{equation}
Because of the definitions of $\Lambda,\Psi_1, \Psi_2,\cdot\cdot \cdot,\Psi_n$, we have that 
\begin{equation}\label{www}
    \phi_{m}'=\phi_{1m},\phi_{m}''=\phi_{2m},\cdot\cdot \cdot,\phi_{m}^{(n)}=\phi_{nm}  \text{  ~for all } m \geq 0. 
\end{equation}
By \eqref{w}, \eqref{wwww} and \eqref{www}, it follows that $\phi_{\ast}'=\phi_{1\ast}, \phi_{\ast}''=\phi_{2\ast},\cdot\cdot \cdot,\phi_{\ast}^{(n)}=\phi_{n\ast}$, implying that $\phi_{\ast}$ is of class $C^{n}$ and its first to $n$-th derivatives are bounded by $\rho_1, \rho_2,\cdot\cdot \cdot,\rho_n$ respectively.
Note that $\phi_{\ast}$ is a fixed point of $\Lambda$, which is a solution of equation  \eqref{o4}. The proof is complete.
\end{proof}
Now we are in a position to prove assertions {\bf (A1)}-{\bf (A3)}.
In fact,  it is shown in \cite[page 6]{tx} that assertion {\bf (A1)} is proved. 
\begin{proof}[Proof of assertion {\bf (A2)}]
By \eqref{H_k}, for any $k=1,2,\dots,n,$ there is no $\phi_k$ in $H_k$. Thus, by \eqref{o} and \eqref{psik}, for any $\phi \in C_{b}^{0,1}(\mathbb{R};L), \phi_1 \in C_{b,\rho_1}^0(\mathbb{R}),\cdot\cdot \cdot,\phi_{k-1} \in C_{b,\rho_{k-1}}^0(\mathbb{R})$, and any $\phi_k, \tilde{\phi}_{k} \in C_{b,\rho_k}^0(\mathbb{R}) $, we have that
\begin{align*}
&\|\Psi_k(\phi,\phi_1,\phi_2,\cdot\cdot\cdot,\phi_k)-\Psi_k(\phi,\phi_1,\phi_2,\cdot\cdot\cdot,\tilde{\phi}_k)\|\\
&= \sup_{x \in \mathbb{R}}|\Psi_k(\phi,\phi_1,\phi_2,\cdot\cdot\cdot,\phi_k)(x)-\Psi_k(\phi,\phi_1,\phi_2,\cdot\cdot\cdot,\tilde{\phi}_k)(x)|\\
&=\sup_{x \in \mathbb{R}}\Big| (h^{-1})'\circ(\phi^2-g)\circ f^{-1}(x) \cdot\bigg(\big(\phi_k\circ\phi\circ f^{-1}(x)-\tilde{\phi_k}\circ\phi\circ f^{-1}(x)\big) \cdot
\\
&\qquad (\phi_1 \circ f^{-1}(x))^k+(\phi_1\circ \phi\circ f^{-1}(x)) \cdot \big((\phi_k \circ f^{-1}(x))-\tilde{\phi_k}\circ f^{-1}(x)\big)\bigg)\Big|\cdot
\\
&\qquad |((f^{-1})'(x))^k|\\
&\le \frac{1}{m\alpha^k}\bigg(\sup_{x \in \mathbb{R}}\big|\phi_k\circ\phi\circ f^{-1}(x)-\tilde{\phi_k}\circ\phi\circ f^{-1}(x) \big| \cdot \rho_1^k\\
&\qquad+\sup_{x \in \mathbb{R}}\big|\phi_k\circ f^{-1}(x)-\tilde{\phi_k}\circ f^{-1}(x) \big| \cdot \rho_1 \bigg)\\
&\le \frac{\rho_1^k+\rho_1}{m\alpha^k}\|\phi_k-\tilde{\phi_k}\|.
 \end{align*}
Therefore, $\Psi_k(k=1,2,\cdot \cdot \cdot,n)$ is a uniform contraction with respect to the first $k$ variables by \eqref{44} and (\ref{55}) . The proof  of assertion {\bf(A2)} is complete.
\end{proof}

\begin{proof}[Proof of assertion {\bf(A3)}]
Let
\begin{align*}
      \mathcal{F}_j(\phi) &:=(h^{-1})^{(j)}\circ (\phi^{2} -g) \circ f^{-1}, \text{ where } \phi \in C_b^{0,1}(\mathbb R;L),\\
         \mathcal{G}_j(\phi_j,\phi) &:=\phi_{j} \circ \phi \circ f^{-1},  \text{  where } \phi \in C_b^{0,1}(\mathbb R;L) \text{ and } \phi_j \in C_{b,\rho_j}^{0}(\mathbb{R},L),\\
           \mathcal{B}_j(\phi_j) &:= \phi_{j} \circ f^{-1}, \text{ where } \phi_j \in C_{b,\rho_j}^{0}(\mathbb{R},L).
\end{align*}
Due to Fa\`a di Bruno's Formula shown in Lemma \ref{FDB} and the definition of $\Psi_k$, to prove the continuity of $\Psi_k(\cdot, \phi_{k\ast})(k=1,2,\cdot \cdot \cdot,n)$ only needs to show that $\mathcal{F}_i(\phi), 1\le i\le k, \mathcal{G}_j(\phi_j,\phi)$ and $  \mathcal{B}_j(\phi_j), j=1,2,\cdots,k-1, $ are all continuous by Lemma \ref{lm-operation}. By Lemma \ref{coninuous of fg}, the proof  of assertion {\bf(A3)} follows.
\end{proof}

\section*{Acknowledgements}
This work is supported by the Science and Technology Research Program of Chongqing Municipal Education Commission (Grant No. KJQN202300540).

\end{document}